\newcommand{\be}{\beta}
\newcommand{\de}{\delta}
\newcommand{\la}{\lambda}
\newcommand{\om}{\omega}
\newcommand{\eps}{\varepsilon}
\newcommand{\iy}{\infty}
\theoremstyle{plain}
\numberwithin{equation}{section}
\newtheorem{thm}{Theorem}[section]
\newtheorem{prop}[thm]{Proposition}
\theoremstyle{definition}
\theoremstyle{remark}
\newtheorem{remark}[thm]{Remark}
\DeclareMathOperator*{\Res}{Res}
\DeclareMathOperator{\diag}{diag}
\begin{document}

\begin{center}
{\Large\bf Spectral data asymptotics \\[0.2cm] for fourth-order boundary value problems}
\\[0.5cm]
{\bf Natalia P. Bondarenko}
\end{center}

\vspace{0.5cm}

{\bf Abstract.}  In this paper, we derive sharp asymptotics for the spectral data (eigenvalues and weight numbers) of the fourth-order linear differential equation with a distribution coefficient and three types of separated boundary conditions. Our methods rely on the recent results concerning regularization and asymptotic analysis for higher-order differential operators with distribution coefficients. The results of this paper have applications to the theory of inverse spectral problems as well as a separate significance.

\medskip

{\bf Keywords:} fourth-order differential operators; distribution coefficients; eigenvalue asymptotics; weight numbers.

\medskip

{\bf AMS Mathematics Subject Classification (2020):} 34L20 34B09 34B05 34E05 46F10  

\vspace{1cm}

\section{Introduction} \label{sec:intr}

Consider the differential equation
\begin{equation} \label{eqv}
y^{(4)} + (\tau_2(x) y')' + (\tau_1(x) y)' + \tau_1(x) y' + \tau_0(x) y = \la y(x), \quad x \in (0, 1),
\end{equation}
where $\tau_2 \in W_2^1[0,1]$ and $\tau_1 \in L_2[0,1]$ are complex-valued functions, $\tau_0$ is the generalized function of class $W_2^{-1}[0,1]$, that is, $\tau_0 = r_0'$, $r_0$ is a complex-valued function of $L_2[0,1]$,
$\la$ is the spectral parameter.
We understand equation \eqref{eqv} with a generalized function coefficient in terms of the regularization approach of Mirzoev and Shkalikov \cite{MS16} (see the details in Section~\ref{sec:prelim}).

This paper aims to derive sharp asymptotics for the eigenvalues and the weight numbers of the boundary value problems $\mathcal L_k$, $k = 1, 2, 3$, for equation \eqref{eqv} with the boundary conditions
\begin{align} \label{bc1}
    \mathcal L_1 \colon & \quad y(0) = 0, \quad y(1) = y'(1) = y''(1) = 0, \\ \label{bc2}
    \mathcal L_2 \colon & \quad y(0) = y'(0) = 0, \quad y(1) = y'(1) = 0, \\ \label{bc3}
    \mathcal L_3 \colon & \quad y(0) = y'(0) = y''(0) = 0, \quad y(1) = 0.    
\end{align}

A general effective method for obtaining eigenvalue asymptotics for arbitrary order differential operators is described in the book of Naimark \cite{Nai68}. However, derivation of sharp asymptotics for specific classes of operators requires additional efforts. In recent years, eigenvalue asymptotics for the fourth-order differential operators with various types of boundary conditions were obtained by Badanin and Korotyaev \cite{BK14, BK15}, M\"oller and Zinsou \cite{MZ17}, Aliyev et al \cite{AKM20}, Polyakov \cite{Pol22, Pol23, Pol23-2} (see also the bibliography in the mentioned papers). Motivation for investigation of spectral properties for the fourth-order operators arises from applications in mechanics, geophysics, and other fields (see, e.g., \cite{Bar74, PT01, Glad05}). Anyway, the mentioned studies deal with the case of regular (integrable) coefficients. There are much less results on spectral data asymptotics for differential equations with distribution coefficients belonging to spaces of generalized functions. Mikhailets and Molyboga \cite{MM04, MM12} investigated the eigenvalue asymptotics for the even-order differential operators $\frac{d^{2m}}{dx^{2m}} + q(x)$ with distribution potential $q \in W_2^{-m}[0,1]$ and semi-periodic boundary conditions. In \cite{Bond22-asympt}, spectral data asymptotics have been obtained for higher-order differential operators with distribution coefficients in the general form with separated boundary conditions.

In this paper, relying on the approach and on the results of \cite{Bond22-asympt}, we derive sharp asymptotics for the eigenvalues of the boundary value problems $\mathcal L_k$, $k = 1, 2, 3$. In addition, we study the asymptotic behavior of the weight numbers, which are used in the inverse spectral theory together with eigenvalues for recovering higher-order differential operators (see, e.g., \cite{Leib72, Yur02, Bond23-loc, Bond23-nsc}). We get explicit formulas for all the constants in the obtained asymptotics in terms of the functions $\tau_1$ and $\tau_2$ from equation \eqref{eqv}. Note that the boundary conditions of the problems $\mathcal L_1$ and $\mathcal L_3$ are non-self-adjoint. Therefore, it is difficult to find the starting index for numbering of their eigenvalues.
The results of this paper play an important role in the spectral data characterization for equation \eqref{eqv} (see \cite{Bond23-nsc}) and also have a separate significance.

\section{Regularization} \label{sec:prelim}

In this section, we discuss the regularization of equation \eqref{eqv} basing on the results of \cite{MS16, Bond23-mmas, Vlad17}.

Let $\sigma_0(x)$ and $\sigma_1(x)$ be the unique functions of $W_2^1[0,1]$ such that
\begin{equation} \label{defsig}
\sigma_0'' = \tau_0, \quad \sigma_0(0) = \sigma_0(1) = 0, \qquad
\sigma_1' = \tau_1, \quad \sigma_1(0) = 0.
\end{equation}

Define the matrix-function 
\begin{equation} \label{defF}
F(x) = [f_{k,j}(x)]_{k,j = 1}^4 = \begin{bmatrix}
            0 & 1 & 0 & 0 \\
            -(\sigma_1 + \sigma_0) & 0 & 1 & 0 \\
            0 & -\tau_2 + 2\sigma_0 & 0 & 1 \\
            \sigma_0^2 - \sigma_1^2 & 0 & \sigma_1 - \sigma_0 & 0
        \end{bmatrix},
\end{equation}
the quasi-derivatives
\begin{equation} \label{quasi}
y^{[0]} := y, \quad y^{[k]} := (y^{[k-1]})' - \sum_{j = 1}^k f_{k,j} y^{[j-1]}, \quad k = \overline{1,4},
\end{equation}
and the domain
$$
\mathcal D_F := \{ y \colon y^{[k]} \in AC[0,1], \, k = \overline{0,3} \}.
$$

Note that, for $y \in \mathcal D_F$, the quasi-derivative $y^{[4]}$ is correctly defined and integrable on $(0,1)$.

\begin{prop} \label{prop:reg}
For any $y \in \mathcal D_F$, the generalized function $\ell(y) := y^{(4)} + (\tau_2 y')' + (\tau_1 y)' + \tau_1 y' + \tau_0 y$ is regular and $\ell(y) = y^{[4]}$.
\end{prop}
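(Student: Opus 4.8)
The strategy is to expand the quasi-derivatives $y^{[k]}$ explicitly in terms of $y, y', y'', y'''$ and the coefficient functions $\sigma_0, \sigma_1, \tau_2$, then compute $y^{[4]}$ by differentiating $y^{[3]}$ and subtracting the appropriate combination $\sum_{j=1}^4 f_{4,j} y^{[j-1]}$, and finally check that the resulting expression matches $\ell(y)$ as a generalized function. The key point is that all the terms involving $\sigma_0'' = \tau_0$, $\sigma_1' = \tau_1$, and derivatives of $\tau_2$ should reassemble into the distributional expression $(\tau_2 y')' + (\tau_1 y)' + \tau_1 y' + \tau_0 y$, with the non-regular pieces (those a priori only in $W_2^{-1}$) canceling so that $y^{[4]} \in L_1(0,1)$, which is already asserted in the remark preceding the proposition.

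First I would compute, using \eqref{quasi} and the entries of $F$ in \eqref{defF}: $y^{[1]} = y'$, then $y^{[2]} = y'' + (\sigma_1 + \sigma_0) y$, then $y^{[3]} = (y^{[2]})' - f_{3,2} y^{[1]} - f_{3,4} y^{[3]} \cdot 0$, i.e. $y^{[3]} = y''' + (\sigma_1 + \sigma_0)' y + (\sigma_1 + \sigma_0) y' - (-\tau_2 + 2\sigma_0) y'$. Here I would use $(\sigma_1+\sigma_0)' = \tau_1 + \sigma_0'$ to keep everything in terms of $AC$ functions; note $\sigma_0' \in W_2^1 \subset AC$, so $y^{[3]}$ is indeed absolutely continuous when $y \in \mathcal D_F$. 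Then I would differentiate $y^{[3]}$ once more — this is where $\sigma_0''=\tau_0$ and $\tau_2'$ enter — and subtract $f_{4,1} y^{[0]} + f_{4,3} y^{[2]} = (\sigma_0^2 - \sigma_1^2) y + (\sigma_1 - \sigma_0)(y'' + (\sigma_1+\sigma_0)y)$. The products $(\sigma_1-\sigma_0)(\sigma_1+\sigma_0) = \sigma_1^2 - \sigma_0^2$ should cancel the $f_{4,1}$ term, and the remaining first- and zeroth-order terms should organize into $\ell(y)$.

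The bookkeeping is the substance of the argument: I would track the coefficient of $y$, of $y'$, and of $y''$ separately. The coefficient of $y''$ comes out to $\tau_2$ after the differentiation produces $(\tau_2 y')'$-type contributions; the coefficient of $y'$ must combine the $\sigma_1, \sigma_0$ terms with their derivatives to reproduce $(\tau_1 y)' + \tau_1 y' = 2\tau_1 y' + \tau_1' y$ in the distributional sense — but since we are asserting regularity, one rewrites $(\tau_1 y)' + \tau_1 y'$ keeping $\sigma_1$ rather than $\tau_1 = \sigma_1'$ wherever a bare $\tau_1'$ would otherwise appear, so that the final expression only involves $\sigma_0, \sigma_1, \tau_2$ and at most one derivative of each, all of which lie in $AC$ or $L_2$. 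Likewise the $\tau_0 y = \sigma_0'' y$ contribution is reorganized as $(\sigma_0' y)' - \sigma_0' y'$ (an $L_2$ function since $\sigma_0' \in W_2^1$ and $y, y' \in AC$), which is the mechanism that makes $y^{[4]}$ regular.

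The main obstacle I anticipate is purely the algebraic care needed in the last differentiation: one must correctly expand $\frac{d}{dx}\big[(\sigma_1+\sigma_0)' y\big] = (\sigma_1+\sigma_0)'' y + (\sigma_1+\sigma_0)' y'$ and interpret $(\sigma_1+\sigma_0)'' = \tau_1' + \tau_0$ distributionally, then verify that every genuinely distributional (non-$L_1$) term cancels against a matching term coming from $f_{4,1}, f_{4,3}$ or from the definition of $\ell$, leaving exactly $y^{[4]} = \ell(y)$ with $y^{[4]} \in L_1(0,1)$. I would conclude by noting that this identity of distributions, together with the remark that $y^{[4]}$ is integrable, gives both assertions of the proposition simultaneously. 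No single step is deep; the risk is an arithmetic slip in matching the roughly half-dozen terms at each order in $y$.
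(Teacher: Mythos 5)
Your plan is correct, but it takes a genuinely different route from the paper: the paper does not prove Proposition~\ref{prop:reg} at all, instead citing it as a special case of Theorem~2.2 of \cite{Bond23-mmas} (which in turn rests on Vladimirov's general regularization construction \cite{Vlad17}), whereas you verify the identity by direct computation with the specific associated matrix \eqref{defF}. Your computation does go through: $y^{[1]} = y'$, $y^{[2]} = y'' + (\sigma_1+\sigma_0)y$, $y^{[3]} = y''' + (\tau_1 + \sigma_0')y + (\sigma_1 - \sigma_0 + \tau_2)y'$, and in the last step the $f_{4,1}$ and $f_{4,3}$ contributions cancel in pairs to give $y^{[4]} = (y^{[3]})' - (\sigma_1-\sigma_0)y''$, after which the distributional identities $\bigl((\sigma_1-\sigma_0)y'\bigr)' - (\sigma_1-\sigma_0)y'' = (\tau_1 - \sigma_0')y'$ and $(\sigma_0' y)' - \sigma_0' y' = \sigma_0'' y = \tau_0 y$ reassemble everything into exactly $\ell(y)$, with regularity following because $y^{[3]} \in AC[0,1]$ and $(\sigma_1-\sigma_0)y'' \in L_1(0,1)$. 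The direct argument is self-contained and elementary; the citation buys generality (arbitrary order and arbitrary singularity orders of the coefficients) at the cost of hiding the bookkeeping. Two small corrections to your write-up: $\sigma_0'$ lies only in $L_2[0,1]$, not in $W_2^1[0,1]$ (it is $\sigma_0$ itself that belongs to $W_2^1$, since $\sigma_0'' = \tau_0 = r_0'$ with $r_0 \in L_2$), though this is harmless because you only need products such as $\sigma_0' y'$ to be integrable; and the absolute continuity of $y^{[3]}$ is part of the definition of $\mathcal D_F$ rather than a consequence of your formula, so it should be invoked, not derived.
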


Proposition~\ref{prop:reg} is a special case of Theorem~2.2 in \cite{Bond23-mmas}, which was obtained from the results of Vladimirov~\cite{Vlad17}. 
It follows from Proposition~\ref{prop:reg} that, for $y \in \mathcal D_F$, equation \eqref{eqv} can be equivalently represented as the first-order system
\begin{equation} \label{sys}
\vec y'(x) = (F(x) + \Lambda) \vec y(x), \quad x \in (0, 1),
\end{equation}
where 
$$
\vec y(x) = \begin{bmatrix} y(x) \\ y^{[1]}(x) \\ y^{[2]}(x) \\ y^{[3]}(x) \end{bmatrix}, \quad
\Lambda = \begin{bmatrix}
                0 & 0 & 0 & 0 \\
                0 & 0 & 0 & 0 \\
                0 & 0 & 0 & 0 \\
                \la & 0 & 0 & 0
           \end{bmatrix}.
$$
Indeed, the first three rows of the system \eqref{sys} coincide with the definitions of the quasi-derivatives \eqref{quasi}, and the last row is equivalent to the equation $y^{[4]} = \la y$. Thus, we have regularized equation \eqref{eqv} by the reduction to the system \eqref{sys} with the integrable matrix function $F(x)$.

Note that, for regularization of equation \eqref{eqv}, different constructions of an associated matrix $F(x)$ can be used (see \cite{MS16, Bond23-mmas}). Anyway, it has been proved in \cite{Bond23-reg} that the choice of an associated matrix does not influence the spectral data, which will be considered in the next sections. For the purposes of this paper, it is convenient to use the associated matrix \eqref{defF}, because it contains all the coefficients $\tau_2$, $\sigma_1$, and $\sigma_0$ of the same smoothness on the same diagonal.

\section{Birkhoff-type solutions}

In this section, we construct the Birkhoff-type solutions of the system \eqref{sys} by the well-known method (see, e.g., \cite{Tam17, Nai68}), using the technique of \cite{Yur22}. Consequently, we get the Birkhoff-type solutions for equation \eqref{eqv}.

In the asymptotic estimates below, we use the following \textbf{notations}:
\begin{itemize}
\item Put $\la = \rho^4$ and divide the complex $\rho$-plane into the sectors
$$
\Gamma_{\kappa} := \left\{ \rho \in \mathbb C \colon \frac{\pi(\kappa - 1)}{8} < \arg \rho < \frac{\pi \kappa}{8} \right\}, \quad \kappa = \overline{1,8}.
$$
\item For $\kappa \in \{ 1, 2, \dots, 8 \}$, $h > 0$, and $\rho^* > 0$, introduce the extended sector
\begin{equation*}
\Gamma_{\kappa, h, \rho^*} := \left\{ \rho \in \mathbb C \colon \rho + h \exp\bigl( \tfrac{i \pi (\kappa - 1/2)}{n}\bigr)  \in \Gamma_{\kappa}, \: |\rho| > \rho^*\right\}.
\end{equation*}
\item For a fixed sector $\Gamma_{\kappa}$, denote by $\{ \om_k \}_{k = 1}^4$ the roots of the equation $\om^4 = 1$ numbered so that
\begin{equation} \label{order}
\mbox{Re} \, (\rho \om_1) < \mbox{Re} \, (\rho \om_2) < \mbox{Re} \, (\rho \om_3) < \mbox{Re} \, (\rho \om_4), \quad \rho \in \Gamma_{\kappa}.
\end{equation}
Put $\Omega := [\om_k^{j-1}]_{j,k = 1}^4$.
\item The notation $\Upsilon(\rho)$ is used for various scalar and vector functions such that
\begin{align} \label{Ups1}
    & \Upsilon(\rho) \to 0 \quad \text{as} \quad |\rho| \to \infty, \quad \rho \in \overline{\Gamma}_{\kappa,h,\rho^*}, \\ \label{Ups2}
    & \{ \| \Upsilon(\rho_n) \| \} \in l_2 \quad \text{for any non-condensing sequence $\{ \rho_n \} \in \Gamma_{\kappa,h,\rho^*}$},
\end{align}
where $\| . \|$ stands for the vector norm if $\Upsilon(\rho)$ is a vector.
Recall that a sequence $\{ \rho_n \}_{n = 1}^{\iy}$ is called \textit{non-condensing} if 
$$
\sup_{t > 0}(N(t + 1) - N(t)) < \iy, \quad N(t) := \#\{ n \in \mathbb N \colon |\rho_n| \le t \}.
$$
\item $\Upsilon(x, \rho)$ stands for various scalar and vector functions having the properties \eqref{Ups1} and \eqref{Ups2} for each fixed $x \in [0,1]$.
\item $\eps(\rho)$ is used for various functions of form $\frac{\Upsilon(\rho)}{\rho^2}$.
\item $\{ \varkappa_n \}$ stands for various $l_2$-sequences.
\end{itemize}

The change of variables 
$$
Y(x) = \Omega^{-1} \diag \{ 1, \rho^{-1}, \rho^{-2}, \rho^{-3} \} \vec y(x)
$$
transforms the system \eqref{sys} into
\begin{equation} \label{sysL}
LY := \frac{1}{\rho} Y' - A(x, \rho) Y = 0,
\end{equation}
where
\begin{gather} \nonumber
A(x, \rho) = A_{(0)} + \frac{A_{(2)}(x)}{\rho^2} + \frac{A_{(4)}(x)}{\rho^4}, \\ \label{A1} A_{(0)} = \diag\{ \om_k \}_{k = 1}^4, \quad A_{(1)} = A_{(3)} = 0,
 \\ \label{A2} A_{(2)} := \Omega^{-1} \begin{bmatrix}
            0 & 0 & 0 & 0 \\
            -(\sigma_1 + \sigma_0) & 0 & 0 & 0 \\
            0 & -\tau_2 + 2\sigma_0 & 0 & 0 \\
            0 & 0 & \sigma_1 - \sigma_0 & 0
        \end{bmatrix} \Omega, \quad
A_{(4)} := \Omega^{-1}  \begin{bmatrix}
            0 & 0 & 0 & 0 \\
            0 & 0 & 0 & 0 \\
            0 & 0 & 0 & 0 \\
            \sigma_0^2 - \sigma_1^2 & 0 & 0 & 0
        \end{bmatrix}\Omega.
\end{gather}
Clearly, the entries of $A_{(2)}(x)$ and $A_{(4)}(x)$ belong to $W_2^1[0,1]$.

Below, we suppose that $\rho \in \overline{\Gamma}_{\kappa, h, \rho^*}$
for some $\kappa$, $h > 0$, $\rho^* > 0$, and $\{ \om_k \}_{k = 1}^4$ are numbered in the order \eqref{order} for $\rho \in \Gamma_{\kappa}$. Note that, for $\rho \in \overline{\Gamma}_{\kappa, h, \rho^*}$, we have $\mbox{Re}(\rho \om_k) - \mbox{Re} (\rho \om_{k+1}) \le c_h < \infty$. 

Let us search for approximate solutions of \eqref{sysL} in the form
\begin{equation} \label{defUk}
U_k(x,\rho) = \exp(\rho \om_k x) \left( g_{(0) k} + \frac{g_{(1)k}(x)}{\rho} + \frac{g_{(2)k}(x)}{\rho^2}\right), \quad k = \overline{1,4},
\end{equation}
where $g_{(\mu)k} = [g_{(\mu) k, j}]_{j = 1}^4$ are column vectors. Furthermore, $U_k(x, \rho)$ has to satisfy the estimate
\begin{equation} \label{estUk}
L U_k = \exp(\rho \om_k x) \rho^{-3} \left( h_k(x) + O(\rho^{-1})\right), \quad \rho \in \overline{\Gamma}_{\kappa,h,\rho^*}, \, |\rho| \to \infty, 
\end{equation}
where $h_k(x) = [h_{k,j}(x)]_{j = 1}^4$ is a column vector such that $h_{k,k} = 0$, and the $O$-estimate is uniform with respect to $x \in [0,1]$.

Due to \cite{Yur22}, the vectors $g_{(\mu)k}$, $\mu = 0, 1, 2$, and $h_k$ have to fulfill the relations
\begin{align*}
 A_{(0)} g_{(0)k} & = \om_k g_{(0)k}, \\
 A_{(0)} g_{(\mu)k} & = \om_k g_{(\mu)k} + g'_{(\mu-1)k} - \sum_{j = 1}^{\mu} A_{(j)} g_{(\mu-j)k}, \quad \mu = 1, 2, \\
 h_k & = g'_{(2)k} - \sum_{j = 1}^3 A_{(j)} g_{(3-j)k}, \quad h_{k,k} = 0.
\end{align*}

Therefore, using \eqref{A1} and \eqref{A2}, we obtain
\begin{gather} \label{gmuk}
g_{(0)k} = e_k, \quad g_{(1)k}(x) = -\frac{1}{4 \om_k} \tau_2^{\langle -1 \rangle}(x) e_k, \quad
g_{(2)k,j}(x) = \begin{cases}
                \frac{1}{(4\om_k)^2} \tau_2^{\langle -2 \rangle}(x), &  j = k, \\
                -\frac{a_{jk}(x)}{\om_j - \om_k}, & j \ne k.
            \end{cases},
\\ \nonumber
h_{k,j}(x) = - \frac{a_{jk}'(x)}{\om_j - \om_k} - \frac{1}{4 \om_k} a_{jk}(x) \tau_2^{\langle -1\rangle}(x), \quad j \ne k,
\end{gather}
where $e_k$ is the $k$-th column of the unit matrix, 
$$
\tau_2^{\langle -1 \rangle}(x) = \int_0^x \tau_2(t) \, dt, \quad
\tau_2^{\langle -2 \rangle}(x) = \int_0^x \tau_2(t) \tau_2^{\langle -1 \rangle}(t) \, dt, \quad
[a_{jk}(x)]_{j,k = 1}^4 := A_{(2)}(x).
$$
Note that $h_{k,j} \in L_2[0,1]$.

Thus, the functions $U_k(x,\rho)$, $k = \overline{1,4}$, given by \eqref{defUk} with the coefficients \eqref{gmuk} satisfy \eqref{estUk}. Let $U(x, \rho)$ be the matrix function consisting of the columns $U_k(x, \rho)$ and let $V_j(x, \rho)$, $j = \overline{1,4}$, be the rows of $V(x, \rho) := (U(x, \rho))^{-1}$. Denote
$$
L^* Z := \frac{1}{\rho} Z' - Z A(x, \rho).
$$

For $k = \overline{1,4}$, consider the integral equation
\begin{align} \nonumber
Y_k(x, \rho) = U_k(x, \rho) & + \rho \int_0^x \left( \sum_{j = 1}^k U_j(x, \rho) L^* V_j(t, \rho)\right) Y_k(t, \rho) \, dt \\ \label{inteqYk} & - \rho \int_x^1 \left( \sum_{j = k+1}^4 U_j(x, \rho) L^* V_j(t, \rho)\right) Y_k(t, \rho) \, dt,
\end{align}
whose solution $Y_k(x, \rho)$ solves the system \eqref{sysL}. Changing the variables
\begin{equation} \label{changeYk}
Y_k(x, \rho) = \exp(\rho \om_k x) W_k(x, \rho), \quad U_k(x, \rho) = \exp(\rho \om_k x) W_k^0(x, \rho)
\end{equation}
and using \eqref{estUk},
we transform equation \eqref{inteqYk} into
\begin{equation} \label{Wk}
W_k(x, \rho) = W_k^0(x, \rho) + \frac{1}{\rho^2} \int_0^1 B_k(x, t, \rho) W_k(t, \rho) \, dt,
\end{equation}
where the function $B_k(x, t, \rho)$ has the form
$$
B_k(x, t, \rho) = \begin{cases}
                    -\sum\limits_{j = 1}^k \exp(\rho(\om_j - \om_k)(x-t)) W_j^0(x, \rho) (h_{k,j}(t) + O(\rho^{-1})), \quad x \ge t, \\
                    \sum\limits_{j = k+1}^4 \exp(\rho(\om_j - \om_k)(x-t)) W_j^0(x,\rho) (h_{k,j}(t) + O(\rho^{-1})), \quad x < t,
                \end{cases}
$$
and the $O$-estimates are uniform with respect to $x \in [0,1]$ and $\rho \in \overline{\Gamma}_{\kappa,h,\rho^*}$ as $|\rho| \to \infty$. The analysis of the integral term in \eqref{Wk} shows that
\begin{equation} \label{difWk}
W_k(x, \rho) - W_k^0(x, \rho) = \frac{\Upsilon(x, \rho)}{\rho^2},
\end{equation}
where $\Upsilon(x,\rho)$ denotes various vector functions satisfying \eqref{Ups1} and \eqref{Ups2} for each fixed $x \in [0,1]$.

Combining \eqref{defUk}, \eqref{gmuk}, \eqref{changeYk}, and \eqref{difWk} all together, we obtain the fundamental solutions $\{ Y_k(x, \rho) \}_{k = 1}^4$ of the system \eqref{sysL} for $\rho \in \overline{\Gamma}_{\kappa,h,\rho^*}$:
$$
Y_k(x, \rho) = \exp(\rho \om_k x) \left( \biggl( 1 - \frac{\tau_2^{\langle -1 \rangle}(x)}{4 \om_k \rho} + \frac{\tau_2^{\langle -2 \rangle}(x)}{(4 \om_k \rho)^2}\biggr) e_k - 
\sum_{\substack{j = 1 \\ j \ne k}}^4 \frac{a_{jk}(x)}{\om_j - \om_k} e_j 
 + \frac{\Upsilon(x, \rho)}{\rho^2}\right).
$$

Returning from the system \eqref{sysL} to equation \eqref{eqv}, we arrive at the following theorem.

\begin{thm} \label{thm:Birk}
For any $\kappa = \overline{1,8}$, $h > 0$ and some $\rho^* > 0$, equation \eqref{eqv} has a fundamental system of solutions $\{ y_k(x, \rho) \}_{k = 1}^4$ whose quasi-derivatives $y_k^{[j]}(x, \rho)$, $k = \overline{1,4}$, $j = \overline{0, 3}$, are continuous by $x \in [0, 1]$ and $\rho \in \overline{\Gamma}_{\kappa, h, \rho^*}$, analytic in $\rho \in \Gamma_{\kappa, h, \rho^*}$, for each fixed $x \in [0, 1]$, and satisfy the asymptotic relation
\begin{equation} \label{asympty}
y_k^{[j]}(x, \rho) = (\rho \om_k)^j \exp(\rho \om_k x) \left( 1 - \frac{\tau_2^{\langle -1 \rangle}(x)}{4 \rho \om_k} + \frac{\tau_2^{\langle -2 \rangle}(x)}{(4 \rho \om_k)^2} - \sum_{\substack{l = 1 \\ l \ne k}}^4 \biggl( \frac{\om_l}{\om_k}\biggr)^j \frac{a_{lk}(x)}{\rho^2 (\om_l - \om_k)} + \frac{\Upsilon(x, \rho)}{\rho^2} \right).
\end{equation}
\end{thm}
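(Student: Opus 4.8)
The plan is to transfer the fundamental system $\{Y_k(x,\rho)\}_{k=1}^4$ of the system \eqref{sysL}, already constructed above, back to equation \eqref{eqv} by inverting the change of variables that produced \eqref{sysL}. Since $Y(x) = \Omega^{-1}\diag\{1,\rho^{-1},\rho^{-2},\rho^{-3}\}\vec y(x)$, I would \emph{define}
$$
\vec y_k(x,\rho) := \diag\{1,\rho,\rho^2,\rho^3\}\,\Omega\, Y_k(x,\rho), \qquad k = \overline{1,4},
$$
and take $y_k(x,\rho)$ to be the first component of $\vec y_k(x,\rho)$. Because \eqref{sysL} is obtained from \eqref{sys} by a linear change of variables that is non-degenerate for $\rho\neq 0$, each $\vec y_k$ solves \eqref{sys}. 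The first three scalar rows of \eqref{sys} are precisely the definitions \eqref{quasi} of the quasi-derivatives, and the components of $\vec y_k$ are absolutely continuous in $x$; hence $y_k\in\mathcal D_F$ and the $(j{+}1)$-th component of $\vec y_k$ equals $y_k^{[j]}$ for $j=\overline{0,3}$. The last row of \eqref{sys} then reads $y_k^{[4]}=\la y_k$, so by Proposition~\ref{prop:reg} we get $\ell(y_k)=y_k^{[4]}=\la y_k$, i.e.\ $y_k$ solves \eqref{eqv}.

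Next I would verify that $\{y_k\}_{k=1}^4$ is a fundamental system with the asserted regularity. Fundamentality is immediate from that of $\{Y_k\}$ together with the invertibility, for $\rho\neq 0$, of $\diag\{1,\rho,\rho^2,\rho^3\}$ and of the Vandermonde matrix $\Omega=[\om_k^{j-1}]$ (the $\om_k$ being the four distinct roots of $\om^4=1$). Continuity of the $y_k^{[j]}$ in $(x,\rho)\in[0,1]\times\overline{\Gamma}_{\kappa,h,\rho^*}$ and analyticity in $\rho\in\Gamma_{\kappa,h,\rho^*}$ are inherited from the same properties of $Y_k$: these follow from solving the integral equation \eqref{Wk} by successive approximations, since on the \emph{extended} sector $\overline{\Gamma}_{\kappa,h,\rho^*}$ the exponential factors in the kernel $B_k$ are uniformly bounded (the shift by $h$ prevents the blow-up at the boundary of $\Gamma_\kappa$ where two of the $\mathrm{Re}(\rho\om_k)$ coincide), so the $\rho^{-2}$-Neumann series converges uniformly and term-by-term preserves continuity and analyticity; then $Y_k=\exp(\rho\om_k x)W_k$ and $\vec y_k=\diag\{1,\rho,\rho^2,\rho^3\}\Omega Y_k$ carry these properties over to the components $y_k^{[j]}$.

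Finally, \eqref{asympty} is obtained by substituting the explicit form of $Y_k(x,\rho)$ into $\vec y_k=\diag\{1,\rho,\rho^2,\rho^3\}\,\Omega\,Y_k$, whose $(j{+}1)$-th component is $\rho^j\sum_{m=1}^4\om_m^j\,(Y_k)_m$. The diagonal entry $m=k$ contributes $\om_k^j\exp(\rho\om_k x)\bigl(1-\tfrac{\tau_2^{\langle-1\rangle}(x)}{4\om_k\rho}+\tfrac{\tau_2^{\langle-2\rangle}(x)}{(4\om_k\rho)^2}\bigr)$; the off-diagonal entries $m\neq k$, which enter $Y_k$ at order $\rho^{-2}$ through $g_{(2)k}$ (cf.\ \eqref{gmuk}), contribute $-\exp(\rho\om_k x)\sum_{m\neq k}\om_m^j\,\tfrac{a_{mk}(x)}{\rho^2(\om_m-\om_k)}$; and the remainder $\sum_{m=1}^4\om_m^j\,\Upsilon_m(x,\rho)/\rho^2$ is again of the form $\Upsilon(x,\rho)/\rho^2$, since for fixed $j,k$ it is a fixed linear combination of functions satisfying \eqref{Ups1}--\eqref{Ups2}. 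Multiplying by $\rho^j$ and factoring out $(\rho\om_k)^j$ turns these three pieces into exactly the bracket in \eqref{asympty}, with $l$ in the role of $m$.

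I do not expect a serious obstacle: the real analytic work — the construction and estimation of the Birkhoff-type solutions $Y_k$ — is already completed before the statement, and what remains is the linear-algebraic translation together with careful bookkeeping of the $\Upsilon$-remainders. The two points needing a little care are (i) confirming that the components of $\vec y_k$ are genuinely the quasi-derivatives of $y_k$, so that \eqref{asympty} really describes $y_k^{[j]}$ and not merely some linear combination of solutions; and (ii) checking that multiplication by $\rho^j$ followed by division by $(\rho\om_k)^j$ leaves the $\rho^{-2}$ order of the error intact and that the error still satisfies the ``for each fixed $x$'' versions of \eqref{Ups1}--\eqref{Ups2} — both immediate once written out.
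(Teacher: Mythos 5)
Your proposal is correct and follows essentially the same route as the paper: the paper's entire argument for Theorem~\ref{thm:Birk} is the construction of the $Y_k(x,\rho)$ carried out just before the statement, followed by the remark ``returning from the system \eqref{sysL} to equation \eqref{eqv},'' and your inversion $\vec y_k=\diag\{1,\rho,\rho^2,\rho^3\}\,\Omega\,Y_k$ with the identification of components as quasi-derivatives via the first three rows of \eqref{sys} is precisely the step the paper leaves implicit. Your bookkeeping of the $m=k$ and $m\neq k$ contributions and of the $\Upsilon/\rho^2$ remainder reproduces \eqref{asympty} exactly.
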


It follows from \eqref{defsig} and \eqref{A2} that
\begin{align*}
a_{lk}(0) & = -\frac{1}{4} \tau_2(0) \om_l^{-2} \om_k, \\
a_{lk}(1) & = -\frac{1}{4} \tau_2(1) \om_l^{-2} \om_k + \frac{1}{4} \sigma_1(1) \bigl( \om_l^{-3} \om_k^2 - \om_l^{-1}\bigr).
\end{align*}
Using the latter relations together with \eqref{asympty}, we conclude that, for $k = \overline{1,4}$, $j = \overline{0,3}$,
\begin{align} \label{asympty0}
    y_k^{[j]}(0, \rho) & = (\rho \om_k)^j \left( 1 + \frac{c_{(0)jk} t_0}{\rho^2} + \eps(\rho) \right), \\ \label{asympty1}
    y_k^{[j]}(1,\rho) & = (\rho \om_k)^j \left( 1 - \frac{\theta}{4 \rho \om_k} + \frac{\theta^2}{32 (\rho \om_k)^2} + \frac{c_{(0)jk} t_1}{\rho^2} + \frac{c_{(1)jk} \sigma}{\rho^2} + \eps(\rho)\right),
\end{align}
where $\eps(\rho) = \frac{\Upsilon(\rho)}{\rho^2}$ and, for brevity,
\begin{gather*}
\theta := \int_0^1 \tau_2(x) \, dx, \quad t_0 := \tau_2(0), \quad t_1 := \tau_2(1), \quad \sigma := \sigma_1(1) = \int_0^1 \tau_1(x) \, dx, \\
c_{(0)jk} := \frac{1}{4} \sum_{\substack{l = 1 \\ l \ne k}}^4 \biggl( \frac{\om_l}{\om_k}\biggr)^j \frac{\om_l^{-2} \om_k}{\om_l - \om_k}, \quad
c_{(1)jk} := -\frac{1}{4} \sum_{\substack{l = 1 \\ l \ne k}}^4 \biggl( \frac{\om_l}{\om_k}\biggr) \frac{\om_l^{-3} \om_k^2 - \om_l^{-1}}{\om_l - \om_k}.
\end{gather*}

\section{Eigenvalue asymptotics}

In this section, we obtain the eigenvalue asymptotics for the boundary value problems $\mathcal L_k$, $k = 1, 2, 3$.
The results of \cite{Bond22-asympt} imply the following proposition.

\begin{prop} \label{prop:rough}
Suppose that $\tau_2, \tau_1 \in L_2[0,1]$ and $\tau_0 \in W_2^{-1}[0,1]$. Then, for each $k = 1, 2, 3$, the spectrum of the boundary value problem $\mathcal L_k$ is a countable set of eigenvalues $\{ \la_{n,k} \}_{n \ge 1}$ (counted with multiplicities) such that
\begin{equation} \label{rough}
\la_{n,k} = (-1)^k \left( \frac{\pi}{\sin\tfrac{\pi k}{4}} \biggl(n + \chi_{k,0} + \frac{\chi_{k,1}}{n} + \frac{\varkappa_{n}}{n}\biggr) \right)^n, \quad
n \in \mathbb N,
\end{equation}
where $\chi_{k,0}$ and $\chi_{k,1}$ are some constants, $\chi_{k,0}$ does not depend on $(\tau_0, \tau_1, \tau_2)$, and $\chi_{k,1}$ depends only on $\theta = \int_0^1 \tau_2(x) \, dx$. The notation $\{ \varkappa_n \}$ is used for various sequences of $l_2$, in particular, in \eqref{rough} and analogous formulas below, $\varkappa_n$ depends on $k$.
\end{prop}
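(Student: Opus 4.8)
The plan is to identify the eigenvalues of each $\mathcal L_k$ with the zeros of a characteristic function and to study that function by means of the Birkhoff-type solutions of Theorem~\ref{thm:Birk}. Fix $k \in \{1,2,3\}$; the problem $\mathcal L_k$ consists of $k$ conditions at $x = 0$, on $y, y', \dots, y^{(k-1)}$, and $4 - k$ conditions at $x = 1$, on $y, y', \dots, y^{(3-k)}$. Because $y(1) = 0$ always occurs among the right-endpoint conditions (and, when $k \le 2$, also $y'(1) = 0$), and because $\sigma_0(0) = \sigma_0(1) = \sigma_1(0) = 0$, it follows from \eqref{quasi} that, restricted to the solution manifold, the ordinary derivatives in the boundary conditions coincide with the corresponding quasi-derivatives. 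Consequently $\la = \rho^4$ is an eigenvalue of $\mathcal L_k$, with the same multiplicity, if and only if $\Delta_k(\rho) := \det M_k(\rho) = 0$, where $M_k(\rho)$ is the $4 \times 4$ matrix whose first $k$ rows are $\bigl( y_j^{[i]}(0,\rho) \bigr)_{j = 1}^4$, $i = \overline{0, k-1}$, and whose remaining $4 - k$ rows are $\bigl( y_j^{[i]}(1, \rho) \bigr)_{j = 1}^4$, $i = \overline{0, 3-k}$, with $\{ y_j(x, \rho) \}_{j = 1}^4$ the fundamental system of Theorem~\ref{thm:Birk}. After an appropriate $\rho$-independent normalization, $\Delta_k$ is entire in $\rho$, and the standard correspondence between its zeros and the eigenvalues, multiplicities included, holds.

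Next I would insert the boundary asymptotics \eqref{asympty0}, \eqref{asympty1} into $\det M_k(\rho)$ and expand over permutations, working in a fixed extended sector $\overline{\Gamma}_{\kappa, h, \rho^*}$, where $\operatorname{Re}(\rho\om_1) \le \dots \le \operatorname{Re}(\rho\om_4)$ with bounded gaps. The dominant contributions are those matching the $4 - k$ rows coming from $x = 1$ with the columns $j$ carrying the largest values of $\operatorname{Re}(\rho\om_j)$. Pulling out the nonzero factor $\exp\bigl( \rho(\om_{k+1} + \dots + \om_4) \bigr)$ and the relevant power of $\rho$, one is left with
$$
c_{k,0} + c_{k,1}\exp\bigl( \rho(\om_k - \om_{k+1}) \bigr) + \text{(lower-order terms)},
$$
where $c_{k,0}$, $c_{k,1}$ are explicit nonzero constants depending only on the $\om_j$, and the lower-order part absorbs the $\rho^{-1}$, $\rho^{-2}$ and $\Upsilon(\rho)$-contributions from \eqref{asympty0}--\eqref{asympty1}. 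Thus $\Delta_k(\rho) = 0$ becomes, asymptotically, $\exp\bigl( \rho(\om_k - \om_{k+1}) \bigr) = -c_{k,0}/c_{k,1} + o(1)$.

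The zeros of the leading term $c_{k,0} + c_{k,1}\exp\bigl( \rho(\om_k - \om_{k+1}) \bigr)$ form an arithmetic progression along a ray; computing $\om_k - \om_{k+1}$ from $\om^4 = 1$ in the appropriate sector gives unperturbed nodes $\rho_n^0$ with $|\rho_n^0| \sim \dfrac{\pi n}{\sin(\pi k/4)}$ and $(\rho_n^0)^4 \sim (-1)^k \Bigl( \dfrac{\pi n}{\sin(\pi k/4)}\Bigr)^4$. I would then apply Rouch\'e's theorem on circles of fixed small radius centred at the $\rho_n^0$, using that the lower-order part of $\Delta_k$ is uniformly small there, to conclude that for large $n$ the function $\Delta_k$ has exactly one zero near each $\rho_n^0$ and no others in the sector; patching together the estimates over the finitely many extended sectors needed to cover the rays where the zeros accumulate yields the complete list $\{ \la_{n,k} \}_{n \ge 1}$ with multiplicities. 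Finally, substituting the ansatz $\rho_{n,k} = \zeta_k \dfrac{\pi}{\sin(\pi k/4)}(n + \delta_n)$ with $\zeta_k^4 = (-1)^k$ back into $\Delta_k(\rho_{n,k}) = 0$ and solving for $\delta_n$ by successive approximation, the constant part of $\delta_n$ yields $\chi_{k,0}$, determined by $c_{k,0}, c_{k,1}$ and hence independent of the coefficients; the $\rho^{-1}$-term of \eqref{asympty1}, which carries only $\theta = \int_0^1 \tau_2$, produces $\chi_{k,1}/n$ depending on the $\tau_j$ solely through $\theta$; and the remaining $\Upsilon(\rho)/\rho^2$ terms assemble into the $\varkappa_n/n$ summand with $\{ \varkappa_n \} \in l_2$, which is precisely \eqref{rough}.

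The genuinely delicate point, which I expect to be the main obstacle, is the non-self-adjoint cases $k = 1$ and $k = 3$: there $\chi_{k,0}$ is in general neither real nor rational, so one cannot exploit any symmetry of the spectrum to fix the additive normalization, and pinning down the correct starting index $n = 1$ in \eqref{rough} requires a global count of the zeros of $\Delta_k$ in large regions of the $\rho$-plane (via an argument-principle estimate with the indicator function of $\Delta_k$) rather than the local Rouch\'e argument alone. All of this — the Birkhoff solutions, the determinant asymptotics, the Rouch\'e localization, and the successive-approximation refinement for separated boundary conditions of arbitrary even order with distribution coefficients — is carried out in \cite{Bond22-asympt}; the proposition then follows by specializing those results to the order $4$ and to the boundary conditions \eqref{bc1}--\eqref{bc3}, together with the observations that $\chi_{k,0}$ is coefficient-independent and that $\chi_{k,1}$ depends on $(\tau_0, \tau_1, \tau_2)$ only through $\theta$.
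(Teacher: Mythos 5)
Your proposal lands in essentially the same place as the paper: the paper offers no internal proof of Proposition~\ref{prop:rough} at all, but simply states that it is implied by the results of \cite{Bond22-asympt}, and your final paragraph makes exactly that deferral, so the argument is correct as a whole. The expanded sketch you give of what happens inside \cite{Bond22-asympt} (characteristic determinant built from Birkhoff columns, extraction of the dominant exponential, Rouch\'e localization, successive approximation, and the global zero count needed to fix the starting index in the non-self-adjoint cases $k=1,3$) is a faithful outline of that machinery and of what the paper itself later does in the proof of Theorem~\ref{thm:eigen}. One caveat: you propose to insert the boundary asymptotics \eqref{asympty0}--\eqref{asympty1} from Theorem~\ref{thm:Birk}, but those are derived under the standing assumption $\tau_2 \in W_2^1[0,1]$ (they involve $\tau_2(0)$ and $\tau_2(1)$ and require the entries of $A_{(2)}$ to be differentiable), whereas Proposition~\ref{prop:rough} is stated under the weaker hypothesis $\tau_2 \in L_2[0,1]$. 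Under those weaker hypotheses one only has the coarser two-term Birkhoff expansion with remainder $\Upsilon(\rho)/\rho$, which is what \cite{Bond22-asympt} actually uses; that coarser expansion still carries the coefficient-independent constant $\chi_{k,0}$ and the $\theta$-dependent term $\chi_{k,1}/n$, and yields the $\varkappa_n/n$ remainder in \eqref{rough}, so the conclusion is unaffected, but the intermediate objects you name are not all available at that level of regularity.
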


For $k = 2$, the constants in \eqref{rough} are known from the previous studies \cite{McL78, Pol23}: $\chi_{2,0} = \frac{1}{2}$, $\chi_{2,1} = -\frac{\theta}{4 \pi^2}$.
In this section, we find $\chi_{k,0}$ and $\chi_{k,1}$ for $k = 1, 3$ and improve the remainder term of \eqref{rough}, taking $\tau_2 \in W_2^1[0,1]$ into account. The main result is presented in the following theorem.

\begin{thm} \label{thm:eigen}
The following asymptotic relations hold for $n \ge 1$:
\begin{align} \nonumber 
\la_{n,2\pm 1} = - \biggl( & \Bigl( \sqrt 2 \pi n + \frac{\pi}{2 \sqrt 2}\Bigr)^4 - \theta \Bigl( \sqrt 2 \pi n + \frac{\pi}{2 \sqrt 2}\Bigr)^2 \\ \label{asymptla13} & - \frac{t_0 + t_1 \mp 4\sigma}{\sqrt 2} \Bigl( \sqrt 2 \pi n + \frac{\pi}{2 \sqrt 2}\Bigr) + n \varkappa_n \biggr), \\ \label{asymptla2}
\la_{n,2} = \Bigl( \pi n & + \frac{\pi}{2} \Bigr)^4 - \theta \Bigl( \pi n + \frac{\pi}{2} \Bigr)^2 + (t_0 + t_1) \Bigl( \pi n + \frac{\pi}{2} \Bigr) + n \varkappa_n.
\end{align}
\end{thm}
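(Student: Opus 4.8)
The plan is to construct the characteristic functions of $\mathcal L_1$, $\mathcal L_2$, $\mathcal L_3$ from the Birkhoff-type solutions of Theorem~\ref{thm:Birk}, isolate their leading ``generalized trigonometric'' parts together with the first two correction terms, and then localize and refine the roots via Rouch\'e's theorem. First I would rewrite the boundary conditions \eqref{bc1}--\eqref{bc3} in terms of the quasi-derivatives \eqref{quasi}. By \eqref{defF} and \eqref{defsig} one has $y^{[1]} = y'$ and $y^{[2]} = y'' + (\sigma_0 + \sigma_1) y$, while $\sigma_0(0) = \sigma_1(0) = \sigma_0(1) = 0$; hence on the solution set of $\mathcal L_1$ the condition $y(1) = 0$ turns $y''(1) = 0$ into $y^{[2]}(1) = 0$, so the three problems become, respectively, $\{ y^{[0]}(0) = 0;\ y^{[0]}(1) = y^{[1]}(1) = y^{[2]}(1) = 0 \}$, $\{ y^{[0]}(0) = y^{[1]}(0) = 0;\ y^{[0]}(1) = y^{[1]}(1) = 0 \}$, and $\{ y^{[0]}(0) = y^{[1]}(0) = y^{[2]}(0) = 0;\ y^{[0]}(1) = 0 \}$. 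Consequently $\la = \rho^4$, with $\rho$ in a suitable extended sector $\Gamma_{\kappa,h,\rho^*}$, is an eigenvalue of $\mathcal L_k$ if and only if the characteristic determinant $\Delta_k(\rho) := \det[\ell_{k,i}(y_j)]_{i,j=1}^4$ vanishes, where $\ell_{k,1}, \dots, \ell_{k,4}$ are the boundary functionals of $\mathcal L_k$ and $\{ y_j(x,\rho) \}_{j=1}^4$ is the fundamental system of Theorem~\ref{thm:Birk}; $\Delta_k$ is analytic in $\rho$. By Proposition~\ref{prop:rough} the eigenvalues accumulate on the ray $\arg\la = \psi_k$, with $\psi_2 = 0$ and $\psi_{2\pm1} = \pi$, so it suffices to work in the sectors whose closures contain that ray; there the ordering \eqref{order} may be taken with $\om_1 = -1$, $\{ \om_2, \om_3 \} = \{ i, -i \}$, $\om_4 = 1$, the two middle roots being tied ($\mathrm{Re}(\rho\om_2) = \mathrm{Re}(\rho\om_3)$) on the ray itself.

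Next I would substitute \eqref{asympty0}--\eqref{asympty1} into $\Delta_k$ and expand the determinant --- along the single row associated with the endpoint carrying one condition ($x = 0$ for $\mathcal L_1$, $x = 1$ for $\mathcal L_3$), and by a Laplace expansion along the two rows at $x = 0$ for $\mathcal L_2$. Using $\sum_{l=1}^4 \om_l = 0$ and the fact that, within the sector, only the two dominant exponential terms matter --- two, not one, because the middle roots are tied --- while all other terms are exponentially small, one arrives at a factorization
\begin{equation*}
\Delta_k(\rho) = \rho^{m_k}\, e^{\rho\om_4} \left( \Delta_k^0(\rho) + \frac{\Delta_k^{(1)}(\rho)}{\rho} + \frac{\Delta_k^{(2)}(\rho)}{\rho^2} + \frac{\Upsilon(\rho)}{\rho^2} \right), \quad m_1 = m_3 = 3, \quad m_2 = 2,
\end{equation*}
where $\Delta_k^0$, $\Delta_k^{(1)}$, $\Delta_k^{(2)}$ are finite linear combinations of exponentials $e^{c\rho}$ with constant coefficients: $\Delta_k^0$ is a two-term expression built from the roots $\om_l$ and the matrix $\Omega$; $\Delta_k^{(1)}$ collects the $\theta$-contributions coming from the term $-\theta/(4\rho\om_j)$ of \eqref{asympty1}; and $\Delta_k^{(2)}$ collects the contributions of $\theta^2$ (from $\theta^2/(32(\rho\om_j)^2)$) and of $t_0, t_1, \sigma$ (from the terms $c_{(0)jk} t_0$, $c_{(0)jk} t_1$, $c_{(1)jk} \sigma$ in \eqref{asympty0}--\eqref{asympty1}). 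Along the relevant ray the nontrivial exponent inside $\Delta_k^0$ is purely imaginary, so the zeros $\rho_{n,k}^0$ of $\Delta_k^0$ form an arithmetic progression; computing its frequency and phase shows that $\rho_{n,k}^0 = e^{i\psi_k/4}\, \nu_{n,k}$ with $\nu_{n,k} := \pi n + \tfrac{\pi}{2}$ for $k = 2$ and $\nu_{n,k} := \sqrt 2\, \pi n + \tfrac{\pi}{2\sqrt 2}$ for $k = 2\pm1$. This already identifies the constants $\chi_{k,0}$ of Proposition~\ref{prop:rough} for $k = 1, 3$.

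Finally I would localize and refine. Since $\Upsilon(\rho) \to 0$ in the sector and the $\rho_{n,k}^0$ are simple zeros that are uniformly separated, Rouch\'e's theorem applied on circles of fixed small radius about them yields, for all large $n$, a unique zero $\rho_{n,k}$ of $\Delta_k$ with $|\rho_{n,k}| = \nu_{n,k} + o(1)$; moreover the sequence $\{ \rho_{n,k} \}$ is non-condensing. Writing $\rho_{n,k} = e^{i\psi_k/4}(\nu_{n,k} + \de_n)$, Taylor-expanding $\Delta_k^0$ about $\rho_{n,k}^0$ (where its derivative is nonzero), using the relations $\Delta_k^0(\rho_{n,k}^0) = 0$, and iterating, one finds $\de_n = b_{1,k}/\nu_{n,k} + (b_{2,k} + \varkappa_n)/\nu_{n,k}^2$, where $b_{1,k} = -\theta/4$ for every $k$ and $b_{2,k}$ is a constant linear in $t_0, t_1, \sigma$ produced by $\Delta_k^{(2)}$, the second derivative of $\Delta_k^0$, and $b_{1,k}$ --- the $\theta^2$-parts cancelling, consistently with $1 - \tfrac{\theta}{4\rho\om_j} + \tfrac{\theta^2}{32(\rho\om_j)^2} = e^{-\theta/(4\rho\om_j)}\bigl(1 + O(\rho^{-3})\bigr)$ --- and the term $\varkappa_n/\nu_{n,k}^2$ comes from $\Upsilon(\rho_{n,k})/\rho_{n,k}^2$ through property \eqref{Ups2} along the non-condensing sequence $\{ \rho_{n,k} \}$. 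Since $\la_{n,k} = \rho_{n,k}^4 = e^{i\psi_k}(\nu_{n,k} + \de_n)^4$ with $e^{i\psi_k} = (-1)^k$, expanding and absorbing every genuinely $O(1)$ term into $n\varkappa_n$ (the factor $n$ arising via $\nu_{n,k}^3 \cdot \varkappa_n/\nu_{n,k}^2 = \nu_{n,k}\varkappa_n$) yields
\begin{equation*}
\la_{n,k} = (-1)^k \bigl( \nu_{n,k}^4 - \theta\, \nu_{n,k}^2 + 4 b_{2,k}\, \nu_{n,k} + n\varkappa_n \bigr),
\end{equation*}
and substituting $\nu_{n,k}$ together with $b_{2,2} = (t_0 + t_1)/4$ and $b_{2,2\pm1} = -(t_0 + t_1 \mp 4\sigma)/(4\sqrt 2)$ gives \eqref{asymptla2} and \eqref{asymptla13}. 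The value $b_{1,k} = -\theta/4$ is consistent with the fact (Proposition~\ref{prop:rough}) that $\chi_{k,1}$ depends only on $\theta$.

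The step I expect to be the main obstacle is the second one --- the precise evaluation of $\Delta_k^{(1)}$ and, above all, of $\Delta_k^{(2)}$. This requires expanding the $4 \times 4$ determinant to order $\rho^{-2}$ while discarding the exponentially small columns correctly (which depends on the exact position of $\rho$ inside the extended sector and on the coincidence $\mathrm{Re}(\rho\om_2) = \mathrm{Re}(\rho\om_3)$), and then combining the data at $x = 0$ (which brings in $t_0$ through $a_{lk}(0)$) with the data at $x = 1$ (which brings in $t_1$ and, crucially, $\sigma$ through the extra $\sigma_1(1)$-term in $a_{lk}(1)$). It is exactly this asymmetry between the two endpoints --- hence between $\mathcal L_1$ and $\mathcal L_3$ --- that produces the sign $\mp 4\sigma$ in \eqref{asymptla13}, and the attendant sign- and constant-bookkeeping is the delicate part; one must also verify that the various $\Upsilon/\rho^2$ remainders remain $l_2$-summable after multiplication by $\nu_{n,k}^3$.
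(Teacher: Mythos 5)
Your overall route for the large-$n$ asymptotics is essentially the paper's: build the characteristic determinant from the Birkhoff solutions of Theorem~\ref{thm:Birk}, reduce it modulo exponentially small terms to a two-exponential relation of the form $r_1(\rho) - r_2(\rho)\exp(\rho(\om_3-\om_4)) + \eps(\rho) = 0$, solve for $\rho_n$ by Rouch\'e's theorem plus iteration, and raise to the fourth power. One genuine difference of route: the paper does \emph{not} compute $\mathcal L_1$ and $\mathcal L_3$ separately. It observes that $\mathcal L_1$ is adjoint to the problem of the same form with $(\tau_0,\tau_1,\tau_2)$ replaced by $(\overline{\tau_0}, -\overline{\tau_1}, \overline{\tau_2})$ under the conditions \eqref{bc3}, so the $\mathcal L_1$ asymptotics follow from the $\mathcal L_3$ ones by conjugation and $\sigma \mapsto -\sigma$; this both halves the determinant bookkeeping and makes the $\mp 4\sigma$ sign flip automatic, whereas you propose to track it through the endpoint asymmetry in $a_{lk}(0)$ versus $a_{lk}(1)$. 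That is doable but is exactly the error-prone part you flag yourself.

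The genuine gap is the starting index. The theorem asserts the formulas \emph{for $n \ge 1$}, and your Rouch\'e argument on small circles around the zeros of $\Delta_k^0$ only delivers a bijection between zeros for $n \ge n_0$; nothing in your proposal pins down which eigenvalue is $\la_{1,k}$. This is not a formality: the introduction stresses that $\mathcal L_1$ and $\mathcal L_3$ are non-self-adjoint, so one cannot order eigenvalues by a variational count, and fixing the numbering is singled out as the difficult point. The paper devotes roughly half the proof to it: by Theorem~1.1 of \cite{Bond22-asympt} the index can be read off from the unperturbed problem $\mathcal L_3^0$ (all $\tau_j \equiv 0$), whose characteristic function is $\Delta^0(\la) = (\sinh\rho - \sin\rho)/(2\rho^3)$; one then shows via the Taylor series that $\Delta^0(\rho^4) \ne 0$ for $|\rho| \le 5$, excludes zeros near the sector edges by explicit exponential bounds, and runs a quantitative Rouch\'e comparison of $2\sinh\rho - 2\sin\rho$ with $g(\rho) = e^{\rho} - i e^{-i\rho}$ on the region $\{|\rho - \rho_n^{\diamond}| \ge 0.1\}$ to conclude that the zeros are exactly $\{\rho_n^0\}_{n\ge1}$ with $|\rho_n^0 - \rho_n^{\diamond}| < 0.1$. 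Without some such global count your proof establishes the theorem only up to an unknown index shift. A secondary slip: for $k = 1, 3$ the relevant $\rho_n$ accumulate near $\arg\rho = \pi/4$, where the \emph{top two} roots $\om_3 = -i$ and $\om_4 = 1$ satisfy $\mathrm{Re}(\rho\om_3) = \mathrm{Re}(\rho\om_4)$, not the two middle ones as you state; it is precisely this tie that leaves the two surviving exponentials $e^{\rho\om_3}$ and $e^{\rho\om_4}$ in the paper's $d(\rho)$, so the dominant-term selection needs to be redone with the correct pair.
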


\begin{proof}
Note that $\mathcal L_1$ is adjoint to the boundary value problem for equation \eqref{eqv} with $(\tau_0, \tau_1, \tau_2)$ replaced by $(\overline{\tau_0}, -\overline{\tau_1}, \overline{\tau_2})$ subject to the boundary conditions \eqref{bc3}. Thus, the eigenvalue asymptotics \eqref{asymptla13} for $\la_{n,1}$ and for $\la_{n,3}$ can be easily obtained from one another. Therefore, it is sufficient to prove \eqref{asymptla13} for $\la_{n,3}$.

Let $C_k(x, \la)$, $k = \overline{1,4}$, be the solutions of equation \eqref{eqv} under the initial conditions
\begin{equation} \label{initC}
C_k^{[j-1]}(0, \la) = \de_{k,j}, \quad j = \overline{1,4},
\end{equation}
where $\de_{k,j}$ is the Kronecker delta.

Note that the boundary conditions \eqref{bc3} are equivalent to 
$$
y(0) = y'(0) = y^{[2]}(0) = 0, \quad y(1) = 1.
$$
Hence, the eigenvalues of $\mathcal L_3$ coincide with the zeros of the characteristic function
$$
\Delta(\la) = \begin{vmatrix}
                C_1(0, \la) & C_2(0, \la) & C_3(0, \la) & C_4(0,\la) \\
                C_1'(0, \la) & C_2'(0, \la) & C_3'(0, \la) & C_4'(0,\la) \\
                C_1^{[2]}(0, \la) & C_2^{[2]}(0, \la) & C_3^{[2]}(0, \la) & C_4^{[2]}(0,\la) \\
                C_1(1, \la) & C_2(1, \la) & C_3(1, \la) & C_4(1,\la)
            \end{vmatrix}.
$$

Let $\la = \rho^4$, $\rho \in \Gamma_{\kappa, h, \rho^*}$, and let $\{ y_k(x, \rho) \}_{k = 1}^4$ be the Birkhoff-type solutions from Theorem~\ref{thm:Birk}.
Then, the both matrix functions $C(x, \la) := [C_k^{[j-1]}(x, \la)]_{j,k = 1}^4$ and $Y(x, \rho) := [y_k^{[j-1]}(x, \rho)]_{j,k = 1}^4$ are fundamental solutions of the system \eqref{sys}. Hence
$$
C(x, \la) = Y(x, \rho) \mathcal A(\rho),
$$
where $\mathcal A(\rho) = (Y(0,\rho))^{-1}$. Consequently,
\begin{equation} \label{relDelta}
\Delta(\la) = D(\rho) \det(\mathcal A(\rho)),
\end{equation}
where
\begin{equation} \label{defD}
D(\rho) = \begin{vmatrix}
                y_1(0, \rho) & y_2(0, \rho) & y_3(0, \rho) & y_4(0,\rho) \\
                y_1'(0, \rho) & y_2'(0, \rho) & y_3'(0, \rho) & y_4'(0,\rho) \\
                y_1^{[2]}(0, \rho) & y_2^{[2]}(0, \rho) & y_3^{[2]}(0, \rho) & y_4^{[2]}(0,\rho) \\
                y_1(1, \rho) & y_2(1, \rho) & y_3(1, \rho) & y_4(1,\rho) 
          \end{vmatrix}.
\end{equation}
It follows from \eqref{asympty} that $\det(\mathcal A(\rho)) \ne 0$ for sufficiently large $|\rho|$, $\rho \in \overline{\Gamma}_{\kappa, h, \rho^*}$. Therefore, for such values of $\rho$, a complex $\la = \rho^4$ is an eigenvalue of the problem $\mathcal L_3$ if and only if $D(\rho) = 0$. 

One can easily see that the images of the two closed sectors $\overline{\Gamma}_1$ and $\overline{\Gamma}_2$ of the $\rho$-plane cover the whole $\la$-plane. Furthermore, the zeros of $D(\rho)$ for sufficiently large $|\rho|$ lie in the neighbourhood of their joint boundary $\arg \rho = \frac{\pi}{4}$. For definiteness, consider $\Gamma_1$, for which
$\om_1 = -1$, $\om_2 = i$, $\om_3 = -i$, $\om_4 = 1$.
Substituting \eqref{asympty0} and \eqref{asympty1} into \eqref{defD} for $\rho \in \Gamma_{1,h,\rho^*}$, we derive
\begin{align} \label{relD}
D(\rho) & = \rho^3 \exp(\rho \om_4) d(\rho), \\ \nonumber
d(\rho) & = r_1(\rho) - r_2(\rho) \exp(\rho(\om_3 - \om_4)) + \eps(\rho), \\ \nonumber
r_1(\rho) & = -4 i + \frac{i \theta}{\rho} + \frac{2 i \sigma}{\rho^2} - \frac{i (t_0 + t_1)}{2 \rho^2} - \frac{i \theta^2}{8\rho^2}, \\ \nonumber
r_2(\rho) & = 4 - \frac{i\theta}{\rho} + \frac{2\sigma}{\rho^2} - \frac{t_0 + t_1}{2\rho^2} - \frac{\theta^2}{8 \rho^2}.
\end{align}

Consequently, the equation $D(\rho) = 0$ in $\Gamma_{1,h,\rho^*}$ can be reduced to the form
$$
\exp(\rho(\om_4 - \om_3)) = \frac{r_2(\rho)}{r_1(\rho)} + \eps(\rho) =
i + \frac{(1 + i) \theta}{4 \rho} + \frac{i \sigma}{\rho^2} - \frac{i(t_0 + t_1)}{4 \rho^2} + \frac{\theta^2}{16 \rho^2} + \eps(\rho).
$$

Taking the logarithm and using the standard approach, based on Rouche's theorem (see \cite{Bond22-asympt}), we find the roots
$$
(1 + i) \rho_n = 2 \pi i n + \frac{\pi i}{2} + \frac{(1 + i) \theta}{4 i \rho_n} + \frac{\sigma}{\rho_n^2} - \frac{t_0 + t_1}{4 \rho_n^2} + \eps(\rho_n)
$$
for $n \ge n_0$, where $n_0$ is sufficiently large.
Recall that $\eps(\rho) = \frac{\Upsilon(\rho)}{\rho^2}$, where $\Upsilon(\rho)$ satisfies \eqref{Ups1} and \eqref{Ups2}. Therefore, $\eps(\rho_n) = \frac{\varkappa_n}{n^2}$, $\{ \varkappa_n \} \in l_2$. Finally, we arrive at the relation
\begin{equation} \label{asymptrhon}
\rho_n = \exp\biggl( \frac{\pi i}{4}\biggr) \left( \sqrt 2 \pi n + \frac{\pi}{2 \sqrt 2} - \frac{\theta}{4 (\sqrt 2 \pi n + \frac{\pi}{2 \sqrt 2})} - \frac{\sigma}{2 \sqrt 2 (\pi n)^2} + \frac{t_0 + t_1}{8 \sqrt 2 (\pi n)^2} + \frac{\varkappa_n}{n^2}\right), \quad n \ge n_0.
\end{equation}

Computing $\la_n = \rho_n^4$, we arrive at the asymptotics \eqref{asymptla13} for sufficiently large values of $n$. It remains to show that the numbering in \eqref{asymptla13} starts from $n = 1$. By virtue of Theorem~1.1 in \cite{Bond22-asympt}, it is sufficient to prove this for the eigenvalues $\{ \la_{n,3}^0 \}$ of the boundary value problem $\mathcal L_3^0$ of the same form as $\mathcal L_3$ but with the zero coefficients $\tau_j^0 = 0$, $j = 0, 1, 2$. Using \eqref{relDelta}, \eqref{defD}, and the Birkhoff solutions $y_k^0(x, \rho) = \exp(\rho \om_k x)$, $k = \overline{1,4}$, of the equation $y^{(4)} = \rho^4 y$, we obtain the relation
$$
\Delta^0(\la) = \frac{\sinh(\rho) - \sin(\rho)}{2 \rho^3}
$$
for the characteristic function of $\mathcal L_3^0$.

Using the Taylor series
$$
\Delta^0(\rho^4) = \frac{1}{3!} + \frac{\rho^4}{7!} + \frac{\rho^8}{11!} + \dots, 
$$
we show that $\Delta^0(\rho^4) \ne 0$ for $|\rho| \le 5$. It is sufficient to consider $0 \le \arg \rho \le \frac{\pi}{2}$. For $0 \le \arg \rho \le \frac{\pi}{8}$, $|\rho| \ge 5$, we have
\begin{align*}
2 |\sinh(\rho)| & \ge |\exp(\rho)| - |\exp(-\rho)| \ge \exp(5 \cos \tfrac{\pi}{8}) - \exp(-5) > 100, \\
2 |\sin(\rho)| & \le |\exp(i \rho)| + |\exp(-i\rho)| \le \exp(5 \sin \tfrac{\pi}{8}) + 1 < 10,
\end{align*}
so $|\sinh(\rho)| > |\sin(\rho)|$ and $\Delta^0(\rho^4)$ has no zeros in this sector. Analogously $\Delta^0(\rho^4) \ne 0$ for $\frac{3\pi}{8} \le \arg \rho \le \frac{\pi}{2}$. Hence, all the zeros of $\Delta^0(\rho^4)$ lie in the region 
$$
\mathcal G := \left\{ \rho \colon |\rho| > 5, \tfrac{\pi}{8} < \arg \rho < \tfrac{3\pi}{8} \right\}.
$$

Consider the functions
\begin{gather*}
f(\rho) := 2 \sinh(\rho) - 2 \sin(\rho) = g(\rho) + s(\rho), \\
g(\rho) := \exp(\rho) - i \exp(-i\rho), \quad s(\rho) := i \exp(i \rho) - \exp(\rho).
\end{gather*}
Obviously, in $\mathcal G$, the function $g(\rho)$ has the zeros 
$$
\rho_n^{\diamond} = \exp\left( \tfrac{\pi i}{4}\right) \left( \sqrt 2 \pi n + \frac{\pi}{2 \sqrt 2}\right), \quad n \ge 1,
$$
which coincide with the main term of \eqref{asymptrhon},
and 
$$
|s(\rho)| \le 2 \exp(-5 \sin \tfrac{\pi}{8}) < 0.3, \quad \rho \in \mathcal G.
$$

Define the region
$$
\mathcal G_{\de} := \{ \rho \in \mathcal G \colon |\rho - \rho_k^{\diamond}| \ge \de, \, k \ge 1 \}, \quad \de := 0.1.
$$
For $\rho = \rho_n^{\diamond} + z \in G_{\de}$, $|z| \ge \de$,
we obtain the estimate
$$
|g(\rho)| \ge |\exp(\rho_n^{\diamond} + z)| |1 - \exp(-(1+i) z)| \ge \exp(5 \cos \tfrac{3\pi}{8}) \frac{|z|}{\sqrt{2}} > 0.4.
$$

Therefore, we get $\dfrac{|s(\rho)|}{|g(\rho)|} < 1$ in $\mathcal G_{\de}$. Applying Rouch\'e's theorem to $f(\rho)$ and $g(\rho)$, we conclude that $f(\rho)$ has only the simple zeros $\{ \rho_n^0 \}_{n \ge 1}$ in $\mathcal G$ such that $|\rho_n^0 - \rho_n^{\diamond}| < 0.1$, and, obviously, so does $\Delta^0(\rho^4)$. This concludes the proof of \eqref{asymptla13}.

The asymptotics \eqref{asymptla2} is proved analogously.
\end{proof}

\begin{remark}
The main part of the asymptotics \eqref{asymptla2} coincides with the one obtained by Polyakov \cite{Pol23} for the case $\tau_j \in W_1^{j + 1}[0,1]$, $\tau_j(0) = \tau_j(1)$ for $j = 0, 2$, $\tau_1 \equiv 0$. Sharp eigenvalue asymptotics for the fourth-order boundary value problems with the boundary conditions \eqref{bc1} and \eqref{bc3}, to the best of the author's knowledge, have not been investigated before, so the asymptotic formula \eqref{asymptla13} is novel even for the case of regular coefficients.
\end{remark}

\section{Weight numbers}

In this section, we recall the definition of the weight numbers from \cite{Bond23-loc} and derive for them precise asymptotics, which play an important role in the spectral data characterization (see \cite{Bond23-nsc}).

Define the functions
$$
\Delta_k(\la) := \det([C_j^{[4-s]}(1,\la)]_{s, j = k+1}^4), \quad
\Delta_k^+(\la) := \det([C_j^{[4-s]}(1,\la)]_{s=\overline{k+1,4}, j = k, \overline{k+2,4}}), \quad k = 1, 2, 3,
$$
where $C_j(x, \la)$, $j = \overline{1,4}$, are the solutions of equation \eqref{eqv} satisfying the initial conditions \eqref{initC}. One can show in the standard way that, for each $k \in \{ 1, 2, 3\}$, the zeros of $\Delta_k(\la)$ coincide with the eigenvalues $\{ \la_{n,k} \}_{n \ge 1}$ of the corresponding boundary value problem $\mathcal L_k$.
The weight numbers are defined as follows:
$$
\be_{n,k} := -\Res_{\la = \la_{n,k}} \frac{\Delta_k^+(\la)}{\Delta_k(\la)}, \quad n \ge 1, \quad k = 1, 2, 3.
$$

According to the asymptotics of Theorem~\ref{thm:eigen}, the eigenvalues $\la_{n,k}$ are simple for $n \ge n_0$ with a sufficiently large $n_0$. Hence, we have
\begin{equation} \label{defbe}
\be_{n,k} = -\frac{\Delta^+_k(\la_{n,k})}{\frac{d}{d\la}\Delta_k(\la_{n,k})}, \quad n \ge n_0, \quad k = 1, 2, 3.
\end{equation}

The main result of this section is provided in the following theorem.

\begin{thm} \label{thm:weight}
For $n \ge n_0$, the weight numbers satisfy the asymptotic relations
\begin{align} \label{asymptbe13}
\be_{n,2\pm 1} & = -4 \la_{n,2\pm 1} \left( 1 + \frac{t_0 + \theta}{ 8(\pi n)^2}  + \frac{\varkappa_n}{n^2}\right), \\ \label{asymptbe2}
\be_{n,2} & = -4 \la_{n,2} \left( 1 + \frac{t_0 + 2 \theta}{4 (\pi n)^2} + \frac{\varkappa_n}{n^2}\right).
\end{align}
\end{thm}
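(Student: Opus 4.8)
The plan is to reduce the computation of $\be_{n,k}$ to a ratio of two $4\times 4$ determinants built from the Birkhoff-type solutions of Theorem~\ref{thm:Birk}, and then to insert the eigenvalue asymptotics of Theorem~\ref{thm:eigen}. As in the proof of Theorem~\ref{thm:eigen}, the adjointness of $\mathcal L_1$ to the problem obtained from $\mathcal L_3$ by $(\tau_0,\tau_1,\tau_2)\mapsto(\overline{\tau_0},-\overline{\tau_1},\overline{\tau_2})$ reduces the case $k=2\pm1$ to $\mathcal L_3$ alone; the case $k=2$ is handled in parallel. Using $C(x,\la)=Y(x,\rho)\mathcal A(\rho)$ with $\mathcal A(\rho)=(Y(0,\rho))^{-1}$ together with cofactor expansions, both $\Delta_k(\la)$ and $\Delta_k^+(\la)$ factor as $(\det Y(0,\rho))^{-1}$ times a $4\times4$ determinant $D_k(\rho)$, resp. $D_k^+(\rho)$, whose rows are the vectors $(y_m^{[j]}(0,\rho))_{m=1}^4$ and $(y_m^{[j]}(1,\rho))_{m=1}^4$ for the appropriate orders $j$ (for $k=3$, $\Delta_3(\la)=C_4(1,\la)$ and $\Delta_3^+(\la)=C_3(1,\la)$). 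Since $\la_{n,k}=\rho_n^4$ with $D_k(\rho_n)=0$, the factor $\det Y(0,\rho_n)$ cancels after differentiation, so \eqref{defbe} becomes, with the sign pinned by \eqref{defbe} itself,
\begin{equation*}
\be_{n,k}=-4\rho_n^3\,\frac{D_k^+(\rho_n)}{D_k'(\rho_n)},\qquad n\ge n_0.
\end{equation*}

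Next I would substitute \eqref{asympty0}, \eqref{asympty1} into $D_k(\rho)$ and $D_k^+(\rho)$. Working in the sector $\Gamma_1$ (for $k=2\pm1$; a sector with $\rho_n$ near its boundary for $k=2$) and keeping only the two exponentially balanced columns, one gets $D_3(\rho)=\rho^3\exp(\rho\om_4)\bigl(r_1(\rho)-r_2(\rho)\exp(\rho(\om_3-\om_4))+\eps(\rho)\bigr)$ with $r_1,r_2$ exactly as in the proof of Theorem~\ref{thm:eigen}, and, by the same computation, $D_3^+(\rho)=\rho^4\exp(\rho\om_4)\bigl(r_1^+(\rho)-r_2^+(\rho)\exp(\rho(\om_3-\om_4))+\eps(\rho)\bigr)$, where $r_1^+,r_2^+$ are explicit polynomials in $\rho^{-1}$ whose coefficients involve $\theta$, $t_0$, $t_1$, $\sigma$ and the generalized Vandermonde $3\times3$ minors of $[\om_m^{\,j}]$. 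Because $D_k(\rho_n)=0$, differentiation annihilates the $\rho^p\exp(\rho\Sigma)$ prefactor and leaves $D_k'(\rho_n)=\rho_n^p\exp(\rho_n\Sigma)\,d_k'(\rho_n)$; then the characteristic relation $r_2(\rho_n)\exp(\rho_n(\om_3-\om_4))=r_1(\rho_n)+\eps(\rho_n)$ turns $d_k'(\rho_n)$ into $-(\om_3-\om_4)r_1(\rho_n)$ plus an explicit $O(\rho_n^{-2})$ correction plus $\Upsilon(\rho_n)/\rho_n^2$, and the same relation lets me replace $\exp(\rho_n(\om_3-\om_4))$ by $r_1(\rho_n)/r_2(\rho_n)$ inside $D_k^+(\rho_n)$.

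Substituting all this into the formula for $\be_{n,k}$ expresses $\be_{n,k}$ as $-4\rho_n^4$ times an explicit rational function of $\rho_n^{-1}$, modulo $\Upsilon(\rho_n)/\rho_n^2$. I would expand this to relative order $\rho_n^{-2}$; the $\rho_n^{-1}$ contributions must cancel, since the target \eqref{asymptbe13}, \eqref{asymptbe2} has no $n^{-1}$ term. It then remains to insert the eigenvalue asymptotics \eqref{asymptrhon} (and its $\mathcal L_2$-analogue), to use $\rho_n^4=\la_{n,k}$ together with $\rho_n^{-2}=-\tfrac{i}{2(\pi n)^2}\bigl(1+O(n^{-1})\bigr)$ for $k=2\pm1$ (and the corresponding relation for $k=2$), and to collect terms; every remainder of the form $\Upsilon(\rho_n)/\rho_n^2$ becomes $\varkappa_n/n^2$ because the sequence $\{\rho_n\}$ is non-condensing. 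This yields \eqref{asymptbe13} for $k=3$, hence for $k=1$ by adjointness, and the parallel computation with $\om_2,\om_3$ dominant gives \eqref{asymptbe2}.

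The main obstacle is the bookkeeping in the middle step: evaluating $D_k^+(\rho)$ to order $\rho^{-2}$ requires retaining the $\tau_2^{\langle-2\rangle}$ and $a_{lk}$ contributions of \eqref{asympty}, expanding several $3\times3$ Vandermonde-type determinants over the fourth roots of unity, and then checking—after division by $D_k'(\rho_n)$ and substitution of \eqref{asymptrhon}—that the $\theta$-, $\sigma$-, $t_0$-, $t_1$-dependence collapses into exactly $\frac{t_0+\theta}{8(\pi n)^2}$ (resp. $\frac{t_0+2\theta}{4(\pi n)^2}$), with the $\sigma$- and $t_1$-terms cancelling. Controlling the error—in particular that $d_k'(\rho_n)$ and $D_k^+(\rho_n)$ carry the stated $\Upsilon/\rho^2$ remainders and that differentiating $D_k$ does not destroy the $l_2$-property—relies on the same Cauchy-estimate and Rouché-type arguments as in \cite{Bond22-asympt} and in the proof of Theorem~\ref{thm:eigen}.
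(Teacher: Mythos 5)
Your proposal follows essentially the same route as the paper: reduce to $\mathcal L_3$ by adjointness, write $\Delta_3(\la)=C_4(1,\la)$ and $\Delta_3^+(\la)=C_3(1,\la)$ as $4\times 4$ Birkhoff-solution determinants $D$, $D^+$ so that $\det\mathcal A(\rho)$ cancels, and evaluate \eqref{defbe} at $\rho_n$ using the characteristic relation and \eqref{asymptrhon}; the paper likewise leaves the final expansion as ``calculations show'' that the ratio equals $-\bigl(1+\tfrac{i(t_0+\theta)}{4\rho^2}+\eps(\rho)\bigr)$. One small caution: with $D_3^+$ defined as you describe (no cofactor sign), one has $\Delta_3^+=-D_3^+\det\mathcal A$, so \eqref{defbe} gives $\be_{n,3}=+4\rho_n^3\,D_3^+(\rho_n)/D_3'(\rho_n)=4\la_{n,3}\,d^+(\rho_n)/d'(\rho_n)$ rather than the minus sign in your intermediate formula.
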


\begin{proof}
Similarly to Theorem~\ref{thm:eigen}, let us focus on the proof of \eqref{asymptbe13} for $\be_{n,3}$. The proofs for $\be_{n,1}$ and $\be_{n,2}$ are analogous. 

Consider $\rho \in \Gamma_{1,h,\rho^*}$, $\la = \rho^4$.
For brevity, put $\Delta(\la) := \Delta_3(\la)$ and
$$
\Delta^+(\la) := \Delta_3^+(\la) = -
\begin{vmatrix}
C_1(0, \la) & C_2(0, \la) & C_3(0,\la) & C_4(0,\la) \\
C_1'(0, \la) & C_2'(0, \la) & C_3'(0,\la) & C_4'(0,\la) \\
C_1^{[3]}(0, \la) & C_2^{[3]}(0, \la) & C_3^{[3]}(0,\la) & C_4^{[3]}(0,\la) \\
C_1(1, \la) & C_2(1,\la) & C_3(1,\la) & C_4(1,\la)
\end{vmatrix}.
$$

Analogously to \eqref{relDelta}, we obtain the relation 
\begin{equation} \label{relDelta+}
\Delta^+(\la) = -D^+(\rho) \det(\mathcal A(\rho)),
\end{equation}
where
$$
D^+(\rho) = \begin{vmatrix}
y_1(0, \la) & y_2(0, \la) & y_3(0,\la) & y_4(0,\la) \\
y_1'(0, \la) & y_2'(0, \la) & y_3'(0,\la) & y_4'(0,\la) \\
y_1^{[3]}(0, \la) & y_2^{[3]}(0, \la) & y_3^{[3]}(0,\la) & y_4^{[3]}(0,\la) \\
y_1(1, \la) & y_2(1,\la) & y_3(1,\la) & y_4(1,\la)
\end{vmatrix}.
$$
Similarly to \eqref{relD}, we derive
\begin{align} \label{relD+}
D^+(\rho) & = \rho^4 \exp(\rho \om_4) d^+(\rho), \\ \nonumber
d^+(\rho) & = r_1^+(\rho) - r_2^+(\rho) \exp(\rho(\om_3 - \om_4)) + \eps(\rho), \\ \nonumber
r_1^+(\rho) & = 4i - \frac{i \theta}{\rho} - \frac{2 i \sigma}{\rho^2} - \frac{i (t_0 - t_1)}{2 \rho^2} + \frac{i \theta^2}{8 \rho^2}, \\ \nonumber
r_2^+(\rho) & = 4 i + \frac{\theta}{\rho} + \frac{2 i \sigma}{\rho^2} + \frac{i (t_0 - t_1)}{2 \rho^2} - \frac{i \theta^2}{8 \rho^2}.
\end{align}

Substituting \eqref{relDelta}, \eqref{relD}, \eqref{relDelta+}, and \eqref{relD+} into \eqref{defbe} for $k = 3$, we obtain 
\begin{equation} \label{relbe}
\be_{n,3} = 4 \la_{n,3} \frac{d^+(\rho_n)}{\frac{d}{d\rho} d(\rho_n)}, 
\end{equation}
where $\la_{n,3} = \rho_n^4$ and $\rho_n$ satisfies \eqref{asymptrhon}.
Calculations show that 
\begin{align} \nonumber
\frac{d^+(\rho)}{\frac{d}{d\rho} d(\rho)} & = \frac{r_1^+(\rho) r_2(\rho) - r_2^+(\rho) r_1(\rho)}{\frac{d}{d\rho} r_1(\rho) r_2(\rho) - \frac{d}{d\rho} r_2(\rho) r_1(\rho) - (\om_3 - \om_4) r_1(\rho) r_2(\rho)} + \eps(\rho) \\ \label{calcbe}
& = - \left( 1 + \frac{i (t_0 + \theta)}{4 \rho^2} + \eps(\rho)\right).
\end{align}

Combining \eqref{asymptrhon}, \eqref{relbe}, and \eqref{calcbe}, we arrive at \eqref{asymptbe2} for $\be_{n,3}$.
\end{proof}

\medskip

{\bf Acknowledgment.} This work was supported by Grant 21-71-10001 of the Russian Science Foundation, https://rscf.ru/en/project/21-71-10001/ 

\medskip

\medskip

\noindent Natalia Pavlovna Bondarenko \\

\noindent Department of Mechanics and Mathematics, Saratov State University, \\
Astrakhanskaya 83, Saratov 410012, Russia, \\
e-mail: {\it bondarenkonp@info.sgu.ru}


\begin{thebibliography}{99}

\bibitem{MS16}
Mirzoev, K.A.; Shkalikov, A.A. Differential operators of even order with distribution coefficients, Math. Notes 99 (2016), no.~5, 779--784.

% 4-order asymptotics

\bibitem{Nai68}
Naimark, M.A. Linear Differential Operators, 2nd ed., Nauka, Moscow (1969); English transl. of 1st ed., Parts I,II, Ungar, New York (1967, 1968).

\bibitem{BK14}
Badanin, A.; Korotyaev, E. Sharp eigenvalue asymptotics for fourth order operators on the circle, J. Math. Anal. Appl 417 (2014), no.~2, 804--818.

\bibitem{BK15}
Badanin, A.; Korotyaev, E.
Inverse problems and sharp eigenvalue asymptotics for Euler-Bernoulli operators, Inverse Problems 31 (2015), no.~5, 055004.

\bibitem{MZ17}
M\"oller, M.; Zinsou, B. Asymptotics of the eigenvalues of self-adjoint fourth order differential operators with separated eigenvalue parameter dependent boundary conditions, Rocky Mountain J. Math. 47 (2017), no.~6, 2013--2042.

\bibitem{AKM20}
Aliyev, Z.S.; Kerimov, N.B.; Mehrabov, V.A. Convergence of eigenfunction expansions for a boundary value problem with spectral parameter in the boundary conditions. I. Differential Equations 56 (2020), no.~2, 143--157.

\bibitem{Pol22}
Polyakov, D.M. Sharp eigenvalue asymptotics of fourth-order differential operators, Asymptotic Analysis 130 (2022), no. 3--4, 477--503.

\bibitem{Pol23}
Polyakov, D.M. On the spectral properties of a fourth-order self-adjoint operator, Diff. Equ. 59 (2023), no.~2, 168--173.

\bibitem{Pol23-2}
Polyakov, D.M. Spectral asymptotics and a trace formula for a fourth-order differential operator corresponding to thin film equation, Monatshefte fur Mathematik 202 (2023), no.~1, 171--212.

% applications

\bibitem{Bar74} 
Barcilon, V. On the uniqueness of inverse eigenvalue problems, Geophys. J. Inter. 38 (1974), no.~2, 287--298.

\bibitem{PT01}
Peletier, L.A.; Troy, W.C. Spatial patterns: higher-order models in physics and mechanics,
Progr. Nonlinear Differential Equations Appl., Vol 45, Birkhauser, Boston (2001).

\bibitem{Glad05}
Gladwell, G.M.L. Inverse Problems in Vibration, Second Edition, Solid Mechanics and Its Applications, Vol. 119, Springer, Dordrecht (2005).

% distributional

\bibitem{MM04}
Mikhailets, V.; Molyboga, V. Uniform estimates for the semi-periodic eigenvalues of the singular differential operators,
Methods Funct. Anal. Topology 10 (2004), no.~4, 30--57.

\bibitem{MM12}
Mikhailets, V.A.; Molyboga, V.M. On the spectrum of singular perturbations of operators on the circle, Math. Notes 91 (2012), no.~4, 588--591.

\bibitem{Bond22-asympt}
Bondarenko N.P. Spectral data asymptotics for the higher-order differential operators with distribution coefficients, J. Math. Sci. 266 (2022), no.~5, 794--815.

% inverse problems

\bibitem{Leib72}
Leibenzon, Z.L. Algebraic-differential transformations of linear differential operators of arbitrary order and their spectral properties applicable to the inverse problem, Math. USSR-Sb. 18 (1972), no.~3, 425--471.

\bibitem{Yur02}
Yurko, V.A. Method of Spectral Mappings in the Inverse Problem Theory, Inverse and Ill-Posed Problems Series, Utrecht, VNU Science (2002).

\bibitem{Bond23-loc}
Bondarenko, N.P. Local solvability and stability of an inverse spectral problem for higher-order differential operators, Mathematics 11 (2023), no. 18, Article ID 3818.

\bibitem{Bond23-nsc}
Bondarenko, N.P. Necessary and sufficient conditions for solvability of an inverse problem for higher-order differential operators (to appear).

% regularization

\bibitem{Vlad17}
Vladimirov, A.A. On one approach to definition of singular differential operators, arXiv:1701.08017 [math.SP].

\bibitem{Bond23-mmas}
Bondarenko N.P. Linear differential operators with distribution coefficients of various singularity orders, Math. Meth. Appl. Sci. 46 (2023), no.~6, 6639--6659.

\bibitem{Bond23-reg}
Bondarenko, N.P. Regularization and inverse spectral problems for differential operators with distribution coefficients, Mathematics 11 (2023), no.~16, Article ID 3455.

% Birkhoff solutions

\bibitem{Tam17}
Tamarkin, J.D. On Certain General Problems of Theory of Ordinary Linear Differential Equations, Petrograd (1917).

\bibitem{Yur22}
Yurko, V.A. Asymptotics of solutions of differential equations with a spectral parameter, arXiv:2204.07505 [math.CA].

% eigenvalue asymptotics

\bibitem{McL78}
McLaughlin, J.R. An inverse eigenvalue problem of order four --- an infinite case, SIAM J. Math. Anal. 9 (1978), no.~3, 395--413.

\end{thebibliography}
\end{document}